\definecolor{MyDarkBlue}{cmyk}{0.8,0.3,0.8,0.4}
\definecolor{yellow}{rgb}{0.99,0.99,0.70}
\definecolor{white}{rgb}{1.0,1.0,1.0}
\definecolor{black}{rgb}{0.00,0.00,0.00}
\numberwithin{equation}{section}
\newcommand{\be}{\begin{eqnarray}}
\newcommand{\ee}{\end{eqnarray}}
\newcommand{\ce}{\begin{eqnarray*}}
\newcommand{\de}{\end{eqnarray*}}
\newtheorem{theorem}{Theorem}[section]
\newtheorem{lemma}[theorem]{Lemma}
\newtheorem{remark}[theorem]{Remark}
\newtheorem{definition}[theorem]{Definition}
\newtheorem{proposition}[theorem]{Proposition}
\newtheorem{Examples}[theorem]{Example}
\newtheorem{corollary}[theorem]{Corollary}
\def\var{{\mathrm{var}}}
\def\eps{\varepsilon}
\def\p{\partial}
\def\[{{\Big[}}
\def\]{{\Big]}}
\def\<{{\langle}}
\def\>{{\rangle}}
\def\({{\Big(}}
\def\){{\Big)}}
\def\bx{{\mathbf{x}}}
\def\dif{{\mathord{{\rm d}}}}
\def\no{\nonumber}
\def\={&\!\!=\!\!&}
\def\mE{{\mathbb E}}
\def\mN{{\mathbb N}}
\def\mP{{\mathbb P}}
\def\mR{{\mathbb R}}
\def\1{{\mathbf{1}}}
\def\sI{{\mathscr I}}
\def\geq{\geqslant}
\def\leq{\leqslant}
\def\ge{\geqslant}
\def\le{\leqslant}
\def\div{\mathord{{\rm div}}}
\def\var{{\mathrm{var}}}
\def\eps{\varepsilon}
\def\p{\partial}
\def\[{{\Big[}}
\def\]{{\Big]}}
\def\<{{\langle}}
\def\>{{\rangle}}
\def\({{\Big(}}
\def\){{\Big)}}
\def\bx{{\mathbf{x}}}
\def\dif{{\mathord{{\rm d}}}}
\def\no{\nonumber}
\def\={&\!\!=\!\!&}
\def\bt{\begin{theorem}}
\def\et{\end{theorem}}
\def\bl{\begin{lemma}}
\def\el{\end{lemma}}
\def\br{\begin{remark}}
\def\er{\end{remark}}
\def\bx{\begin{Examples}}
\def\ex{\end{Examples}}
\def\bd{\begin{definition}}
\def\ed{\end{definition}}
\def\bp{\begin{proposition}}
\def\ep{\end{proposition}}
\def\bc{\begin{corollary}}
\def\ec{\end{corollary}}
\def\geq{\geqslant}
\def\leq{\leqslant}
\def\ge{\geqslant}
\def\le{\leqslant}
\def\div{\mathord{{\rm div}}}
\def\<{\langle} \def\>{\rangle}
\def\bpf{\begin{proof}}
\def\epf{\end{proof}}
\begin{document}
	\title[Convergence rate of Euler scheme to dDSDE]{Convergence rate of the Euler-Maruyama scheme to density dependent SDEs driven by $\alpha$-stable additive noise}



\author{Ke Song}
\address[Ke Song]{Department of Mathematics, Beijing Institute of Technology, Beijing 100081, China}
\curraddr{}
\email{ske2022@126.com}
\thanks{K. Song is grateful to the financial supports by National Key R \& D Program of China (No. 2022YFA1006300) and the financial supports of the NSFC (No. 12271030).}


\author{Zimo Hao}
\address[Zimo Hao]{Fakult\"at f\"ur Mathematik, Universit\"at Bielefeld,
	33615, Bielefeld, Germany}
\curraddr{}
\email{zhao@math.uni-bielefeld.de}
\thanks{Z. Hao is supported by DFG through the CRC 1283/2 2021 - 317210226
``Taming uncertainty and profiting from randomness and low regularity in analysis, stochastics and their applications''. }

\subjclass[2010]{Primary 65C30, 60G52}

\date{}

\dedicatory{}

\commby{}

\begin{abstract} In this paper, we establish the weak convergence rate of density-dependent stochastic differential equations with bounded drift driven by $\alpha$-stable processes with $\alpha\in(1,2)$. The well-posedness of these equations has been previously obtained in \cite{wu2023well}. We derive an explicit convergence rate in total variation for the Euler-Maruyama scheme, employing a technique rooted in \cite{hao2023}.
\end{abstract} 

\maketitle

\tableofcontents
\section{Introduction}
In this paper, we let $\alpha \in (1,2)$ and consider the Euler-Maruyama scheme applied to the following density-dependent stochastic differential equation (dDSDE):
\begin{equation} \label{1}
	\dif X_t=b(t,X_t,\rho_t(X_t)) \dif t+\dif L_t, \quad X_{0} \stackrel{(d)}{=} \mu_{0},
\end{equation}
where $(L_t)_{t \ge 0}$ is a $d$-dimensional symmetric and rotationally invariant $\alpha$-stable process on some probability space $(\Omega, \mathcal{F}, \mathbb{P})$, $b:\mathbb{R}_+ \times \mathbb{R}^d \times \mathbb{R}_+ \to \mathbb{R}^d$ is a bounded Borel measurable vector field, $\mu_{0}$ is a probability measure over $\mathbb{R}^d$ and for $t>0, \rho_t(x)=\mathbb{P} \circ X_t^{-1}(\dif x)/\dif x$ is the distributional density of $X_t$ with respect to (w.r.t.) the Lebesgue measure $\dif x$ on $\mathbb{R}^d.$
By It\^o's formula, one sees that $\rho_t$ solves the following nonlinear Fokker-Planck equation (FPE) in the distributional sense:
 \begin{align}\label{P13}
\p_t\rho_t-\Delta^{\alpha/2}\rho_t+\div(b(t,\cdot,\rho_t)\rho_t)=0,\quad \lim_{t\downarrow 0}\rho_t=\mu_0\mbox{ weakly},
\end{align}
where the fractional Laplacian operator, which is the infinitesimal generator of $\alpha$-stable process $\left(L_{t}\right)_{t \geqslant 0}$, is defined by  
	$$
	\Delta^{\alpha / 2}:=c\int_{\mathbb{R}^d} \left( \phi(x+z)-\phi(x)-z \mathbf{1}_{|z| \le 1} \cdot \nabla \phi(x)  \right) |z|^{-d-\alpha} \dif z,
	$$
	with some specific constant $c=c(d, \alpha)>0$.
More precisely, for any $\varphi\in C_0^\infty(\mR^d)$,
\begin{align}\label{DPDE}
\<\rho_t, \varphi\>=\<\mu_0,\varphi\>+\int_0^t\<\rho_s,\Delta^{\alpha/2}\varphi\>\dif s
+\int_0^t\<\rho_s, b(s,\cdot,\rho_s)\cdot\nabla\varphi\>\dif s,
\end{align}
where $\<\rho_t, \varphi\>:=\int_{\mR^d}\varphi(x)\rho_t(x)\dif x=\mE\varphi(X_t)$.

In recent years, the study of distributional dependent stochastic differential equations (DDSDEs), also known as McKean-Vlasov SDEs, has garnered significant attention due to their wide range of applications. These applications span various fields, including mean-field games (see e.g. \cite{CD18}), vortex models (see e.g. \cite{Se20}), and kinetic theory (see e.g. \cite{Ka56}). 
The general form of such an equation is given by:
\begin{align*}
\dif X_t=B(t,X_t,\mu_t)\dif t+\Sigma(t,X_t,\mu_t)\dif W_t,
\end{align*}
where $\mu_t$ is the time marginal distribution of the solution. 
Considerable research has been devoted to various aspects of these equations, including well-posedness (see \cite{MV16, RZ21} for instance), derivative formulas (see \cite{RW19} for instance), long-time behavior (see \cite{HRW21, HRZ24b} for instance), and the propagation of chaos of $N$-particle systems (see \cite{Sz91, JW18, La21} for instance) and others related topics. These studies typically assume that the coefficients $B$ and $\Sigma$ depend continuously on $\mu$ in terms of Wasserstein or the total variation distances.


In this paper, we focus on a specific case where the drift term $B(t,x,\mu)$ depends on the distributional density, denoted by $b(t,x,\frac{\dif \mu}{\dif x}(x))$. This type of dependence is referred to as ``Nemytskii-type" and represents a significant departure from the traditional McKean-Vlasov SDEs. Such dDSDEs (also called McKean-Vlasov SDEs of Nemytskii-type) were first introduced in Section 2 of Barbu and R\"ockner's work \cite{BR20}. Unlike classical McKean-Vlasov SDEs, the mapping $\mu\to b(t,x,\frac{\dif \mu}{\dif x}(x))$ is even not continuous with respect to the weak convergence topology. This discontinuity presents additional challenges in the analysis of these equations. 

Despite these challenges, the well-posedness of dDSDEs driven by both Brownian motion and L\'evy processes has been established in a series of papers by Barbu and R\"ockner \cite{BR20, BR21a, BR21b, BR23a, BR23b, BR24}. Further literature on the well-posedness of these equations with singular drift $b$ can be found in recent works \cite{Wa23, HRZ24, Le24}. 



In this paper, we consistently assume the following condition is upheld: 
\begin{itemize}
	\item[{\textbf{(H)}}]
	There is a constant $\kappa>0$ such that for all $(t,x,u_i) \in \mathbb{R} \times \mathbb{R}^d \times \mathbb{R}_+, i=1,2,$ 
	\begin{equation}\label{6}
		|b(t,x,u_1)|\le \kappa\qquad \text{and}\quad \qquad |b(t,x,u_1)-b(t,x,u_2)| \le \kappa|u_1-u_2|.
	\end{equation}
	Moreover, $\mu_0(\dif x)=\rho_0(x)\dif x$ with $\rho_0\in L^q(\mR^d)$ for some $q\in (\frac{d}{\alpha-1}\vee2,\infty]$. Here $L^q(\mR^d)$, $q\in[1,\infty]$, is the space of all Lebesgue measurable and $q$-integrable functions on $\mR^d$, with the standard norm $\|\cdot\|_{q}$.  
\end{itemize}
Under the condition {\textbf{(H)}}, a unique weak solution to dDSDE \eqref{1} on $[0, T]$ was obtained for arbitrary $T>1$ in \cite{wu2023well}
in the sense of the following definition.

\begin{definition}(Weak Solutions) \label{3}
	Let $\mu_{0}$ be a probability measure on $\mathbb{R}^d$ and $\alpha \in (1,2).$ We call a filtered probability space $(\Omega, \mathcal{F}, \mathbb{P};(\mathcal{F}_t)_{t \ge 0})$ together with a pair of $(\mathcal{F}_t)$-adapted processes $(X_t,L_t)_{t \ge 0}$ defined on it a weak solution of SDE \eqref{1} with initial distribution $\mu_{0}$, if 
	\begin{enumerate}
		\item[(i)] $ \mathbb{P} \circ X_0^{-1} = \mu_{0},$ and $(L_t)_{t \ge 0}$ is a $d$-dimensional symmetric and rotationally invariant $\alpha$-stable process.
		\item[(ii)] for each $t>0,$ $\rho_t(x)=\mathbb{P} \circ X_t^{-1} (\dif x)/ \dif x$ and 
		\begin{equation*}
			X_t=X_0+\int_{0}^{t} b(s,X_s, \rho_s(X_s)) \dif s+L_t, \quad \mathbb{P} \text{-a.s.}
		\end{equation*}
	\end{enumerate}
\end{definition}
Specifically, the weak solution obtained in \cite{wu2023well} is constructed by the limit of the following Euler-Maruyama scheme:
Letting $T>1$ and $h \in (0,1)$, define
\begin{equation} \label{Euler1}
	X_t^h:=X_0+ L_t, \quad \text{$t \in [0,h],$}
\end{equation}				
and for $t \in [kh,(k+1)h]$ with $k=1,2,\ldots,\left[\frac{T}{h} \right],$
\begin{equation} \label{Euler2}
	X_t^h=X_{kh}^h+\int_{kh}^{t} b^h(s,X_{kh}^h, \rho_{kh}^h(X_{kh}^h)) \dif s+(L_t-L_{kh}),
\end{equation}
where $\rho_{t}^h$ is the distributional density of $X_{t}^h$ w.r.t. the Lebesgue measure. Its existence is obvious from the construction of $X_{t}^h$.  
Denoting
\begin{equation*}
	\pi_h(s):=\sum_{j=0}^{\infty} jh \mathbf{1}_{[jh,(j+1)h)} (s),
\end{equation*}
from the construction \eqref{Euler1} and \eqref{Euler2}, it is evident that $X_t^h$ solves the following SDE:
\begin{equation*}
	X_t^h=X_0+\int_{0}^{t} b^h(s,X_{\pi_h(s)}^h) \dif s + L_t,
\end{equation*}
where
\begin{equation*}
	b^h(s,x):=\mathbf{1}_{\{s \ge h \}} b \left(s,x,\rho_{\pi_h(s)}^h (x)\right).
\end{equation*}
In \cite{wu2023well}, Wu and the second named author provided uniform estimates for ${\rho^h, h>0}$ in $h$. However, they did not provide a quantitative estimate of the difference $\rho^h-\rho$, which is crucial in numerous areas such as numerical analysis, mathematical finance, and stochastic control theory. 
Even for the case $b=b(t,x)$, which does not depend on the distribution, except for a recent work \cite{FJM24}, there seems no result concerning the convergence rate of Euler approximation when $b$ is only bounded (we will discuss the related results later). 
Therefore, in this paper, we investigate the convergence rate of this Euler-Maruyama approximation.

\subsection{Main results}
The following is the main result of this article.
\begin{theorem}\label{11}
	Assume {\textbf{(H)}} holds. 
	Then there is a constant $C=C(d,T, \kappa,q,\|\rho_0\|_{q})$ such that for all $t \in [0,T]$ and $h \in (0,1)$,
	\begin{equation}\label{0425:00}
		\| \rho_t-\rho_t^h\|_1 \le Ch^{\frac{\alpha-1}{\alpha}}.
	\end{equation}
\end{theorem}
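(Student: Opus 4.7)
The plan is to follow the duality and backward PDE strategy of \cite{hao2023}. Since
$$\|\rho_t-\rho_t^h\|_1=\sup_{\|\varphi\|_\infty\le 1}|\mE\varphi(X_t)-\mE\varphi(X_t^h)|,$$
it suffices to bound the right-hand side uniformly over bounded test functions $\varphi$. For each such $\varphi$ and fixed terminal time $t\in(0,T]$, I would first solve the linear (in $u$) backward Kolmogorov-type equation
$$\p_s u_s+\Delta^{\alpha/2}u_s+b(s,\cdot,\rho_s(\cdot))\cdot\nabla u_s=0\ \text{on}\ [0,t],\qquad u_t=\varphi.$$
Its drift coefficient is bounded by $\kappa$ uniformly in $s$, so a standard perturbative argument for the fractional Laplacian (the freezing-of-coefficients approach as in \cite{wu2023well}) delivers $\|u_s\|_\infty\le\|\varphi\|_\infty$ together with the sharp gradient bound $\|\nabla u_s\|_\infty\le C(t-s)^{-1/\alpha}\|\varphi\|_\infty$, an integrable singularity in $s$ because $\alpha>1$.

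Applying It\^o's formula to $u_s(X_s)$ (along which the PDE kills the $\dif s$-part) and to $u_s(X_s^h)$ (along which the frozen Euler drift leaves a residue), subtracting, and taking expectations yields the error identity
$$\mE\varphi(X_t^h)-\mE\varphi(X_t)=\mE\!\int_0^t\!\bigl[b^h(s,X_{\pi_h(s)}^h,\rho_{\pi_h(s)}^h(X_{\pi_h(s)}^h))-b(s,X_s^h,\rho_s(X_s^h))\bigr]\cdot\nabla u_s(X_s^h)\,\dif s.$$
The contribution from $s\in[0,h)$, on which $b^h\equiv 0$, is immediately controlled by $\kappa\int_0^h\|\nabla u_s\|_\infty\,\dif s\lesssim h^{(\alpha-1)/\alpha}$. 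For $s\ge h$ I would split the integrand, by telescoping, into a \emph{spatial freezing} piece $[b(s,X_{\pi_h(s)}^h,w)-b(s,X_s^h,w)]\cdot\nabla u_s(X_s^h)$ with $w:=\rho_{\pi_h(s)}^h(X_{\pi_h(s)}^h)$, a \emph{temporal freezing} piece $[b(s,X_s^h,\rho_{\pi_h(s)}^h(X_{\pi_h(s)}^h))-b(s,X_s^h,\rho_s^h(X_s^h))]\cdot\nabla u_s(X_s^h)$, and a \emph{density-gap} piece $[b(s,X_s^h,\rho_s^h(X_s^h))-b(s,X_s^h,\rho_s(X_s^h))]\cdot\nabla u_s(X_s^h)$.

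The spatial and temporal freezing pieces are bounded by the technique of \cite{hao2023}. Since $b$ has no spatial regularity, $|b(X_{\pi_h(s)}^h,w)-b(X_s^h,w)|$ cannot be bounded pointwise; instead one writes the expectation through the joint law of $(X_{\pi_h(s)}^h,X_s^h)$, invokes the two-sided heat-kernel upper bound on $\rho^h$ inherited from \cite{wu2023well}, and transfers the roughness of $b$ in $x$ onto the smooth $\nabla u_s$ via a change of variables. The $\alpha$-stable semigroup smooths on scale $h^{1/\alpha}$, and integration against $(t-s)^{-1/\alpha}$ produces the clean rate $h^{(\alpha-1)/\alpha}$. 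The temporal piece is handled analogously, using the Lipschitz dependence of $b$ on its third argument together with a H\"older-in-time estimate $\|\rho_s^h-\rho_{\pi_h(s)}^h\|_p\lesssim h^{(\alpha-1)/\alpha}$ derived from the mild form of the Fokker-Planck equation for $\rho^h$.

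The density-gap piece is what must be Gronwalled. By \eqref{6} and the $L^q\to L^\infty$ heat-kernel smoothing $\|\rho_s^h\|_\infty\lesssim s^{-d/(\alpha q)}\|\rho_0\|_q$, its expectation is at most $Cs^{-d/(\alpha q)}(t-s)^{-1/\alpha}\|\rho_s^h-\rho_s\|_1$; the hypothesis $q>\frac{d}{\alpha-1}$ is precisely what makes the combined singularity $s^{-d/(\alpha q)}(t-s)^{-1/\alpha}$ locally integrable in $s$. Taking the supremum over $\varphi$, the above estimates collapse into the weakly singular Volterra inequality
$$\|\rho_t-\rho_t^h\|_1\le Ch^{(\alpha-1)/\alpha}+C\!\int_0^t\!(t-s)^{-1/\alpha}s^{-d/(\alpha q)}\|\rho_s^h-\rho_s\|_1\,\dif s,$$
and a fractional Gronwall (Henry's) lemma yields \eqref{0425:00}. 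The main obstacle is the spatial freezing piece: extracting the rate $h^{(\alpha-1)/\alpha}$ for a purely bounded drift requires the full heat-kernel machinery of \cite{hao2023,wu2023well}, and the density-dependence of $b$ only contributes the closing Volterra step on top of this core linear-SDE estimate.
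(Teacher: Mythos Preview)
Your plan matches the paper's architecture: a backward PDE test function, It\^o's formula, a telescoping decomposition of the drift residue into freezing errors plus a density-gap term, and closure by a Volterra--Gronwall inequality with kernel $(t-s)^{-1/\alpha}s^{-d/(q\alpha)}$. The one structural difference is that you solve the drifted backward equation $\partial_s u+\Delta^{\alpha/2}u+b^0\!\cdot\!\nabla u=0$, whereas the paper takes the free equation $\partial_s u^t+\Delta^{\alpha/2}u^t=0$, i.e.\ $u^t(s,\cdot)=q_\alpha(t-s)*\varphi$. Your choice eliminates the term the paper calls $\mathscr{I}_1$ (which in the paper is simply absorbed into the Gronwall step anyway), but the paper's choice buys explicit bounds $\|\nabla^k u^t(s)\|_\infty\lesssim(t-s)^{-k/\alpha}$ for \emph{all} $k$. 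This is exactly what drives the spatial-freezing estimate, where the paper does not use a change of variables: it rewrites $[b^h(X_{\pi_h(s)}^h)-b^h(X_s^h)]\cdot\nabla u^t(X_s^h)$ as $[(b^h\cdot\nabla u^t)(X_{\pi_h(s)}^h)-(b^h\cdot\nabla u^t)(X_s^h)]+b^h(X_{\pi_h(s)}^h)\cdot[\nabla u^t(X_s^h)-\nabla u^t(X_{\pi_h(s)}^h)]$, bounds the first bracket by $\|\rho_{\pi_h(s)}^h-\rho_s^h\|_1\|\nabla u^t(s)\|_\infty$ via Lemma~\ref{lem:22}, and bounds the second by Lemma~\ref{lem:0425} (the $\alpha$-stable analogue of \cite[Lemma~7.9]{hao2023}), which requires $\|\nabla^2 u^t(s)\|_\infty$ and $\|\nabla^3 u^t(s)\|_\infty$. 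The clean rate $h^{(\alpha-1)/\alpha}$ without $\varepsilon$-loss then comes from an interpolation split of the $s$-integral at $t-h$: on $[0,t-h]$ Lemma~\ref{lem:0425} gains a full factor $h$ against $(t-s)^{-(1+\alpha)/\alpha}$, while on $[t-h,t]$ one uses $\|\nabla u^t(s)\|_\infty\lesssim(t-s)^{-1/\alpha}$ directly. With your drifted $u$ and merely bounded $b^0$, second and third spatial derivatives of $u$ are not available, so your ``change of variables'' substitute would have to avoid them; the simplest fix is to switch to the free heat equation and accept the extra (harmless) Gronwall term. A smaller point: in your telescope the temporal piece produces $|\rho_{\pi_h(s)}^h(X_{\pi_h(s)}^h)-\rho_s^h(X_s^h)|$, which mixes a time shift and a spatial shift of the density argument; the paper's decomposition through $b^h(s,x)=b(s,x,\rho_{\pi_h(s)}^h(x))$ keeps the density evaluated at the same spatial point and avoids this cross term.
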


%


Notably, there appears to be a scarcity of results addressing the Euler-Maruyama approximation to the dDSDE \eqref{1}. 
For the case of Brownian motion, the convergence of the Euler-Maruyama approximation has been obtained in \cite{HRZ21}, with a convergence rate in the total variation of $1/2$ established in \cite{hao2023}. Similarly, for $\alpha$-stable cases, the well-posedness has been addressed in \cite{wu2023well}.

Regarding our main results, even in the case where $b(t,x,u)=b(t,x)$ is not density-dependent, Theorem \ref{11} represents a novel contribution. Importantly, the convergence rate therein does not depend on the continuous regularity of the drift term. However, it is worth noting that in comparison to the strong convergence result established in \cite{BDG22}, their results do not directly yield total variation convergence from path convergence. Furthermore, the convergence rate obtained in \cite{FJM24} is $(\alpha-1)/\alpha-\eps$ for any small $\eps>0$ when $b$ is not density-dependent and bounded. In comparison, our results provide a clear rate without the $\eps$. Nonetheless, the optimality of this rate for our case remains uncertain and may warrant further investigation to determine the precise optimal rate.

\subsection{The primary outline of the proof} In this section, we provide an outline of the proof for Theorem \ref{11}. The complete details can be found in Section \ref{sec3}. To show our main result, we note that $\| \rho_t-\rho_t^h\|_1$ is equivalent to the total variation distance:
\begin{align*}
\|\mP\circ(X_t)^{-1}-\mP\circ(X_t^h)^{-1}\|_{\rm var}&:=\sup_{\phi\in L^\infty(\mR^d);\|\phi\|_\infty=1}|\mE\phi(X_t)-\mE\phi(X_t^h)|\\
&=\sup_{\phi\in C^\infty_b(\mR^d);\|\phi\|_\infty=1}|\mE\phi(X_t)-\mE\phi(X_t^h)|.
\end{align*}
Then we consider the function $u^t(s,x):=\mE \phi(x+L_{t-s}),$ which solves the following backward heat equation:
\begin{equation} 
	\partial_s u^t+ \Delta^{\frac{\alpha}{2}} u^t=0, \quad u^t(t,x)=\phi(x), \quad s \in (0,t), \ x \in \mathbb{R}^d.
\end{equation}
 By applying It\^o's formula, we obtain the following relationship:
\begin{align}\label{0527:00}
\mathbb{E} \phi(X_t^h)-\mathbb{E} \phi(X_t)=\mathbb{E} \int_{0}^{t} b^h(s, X_{\pi_h(s)}^h) \cdot \nabla u^t(s, X_{s}^h) \dif s-\mathbb{E} \int_{0}^{t} (b^0\cdot \nabla u^t)(s,X_{s}) \dif s,
\end{align}
where $b^0(t,x):=b(t,x,\rho_t(x))$. This representation, known as the It\^o-Tanaka trick, has found extensive application across diverse academic domains (see \cite{MPT17} for Euler approximation, \cite{RSX17, CHR24} for slow-fast systems, and \cite{WH23} for well-posedness). Our contribution lies in using the regularity of the semigroup $\phi\to \mathbb{E}[\phi(x+L_{t})]$, drawing inspiration from interpolation techniques to achieve the convergence rate of $(\alpha-1)/\alpha$.

 More precisely, in estimating the right-hand side of \eqref{0527:00},
 a crucial term is
\begin{align*}
I^h:=\mathbb{E} \int_{0}^{t} \left(b^h(s,X_{\pi_h(s)}^h) -b^h( s, X_{s}^h) \right) \cdot \nabla u^t(s,X_{s}^h) \dif s.
\end{align*}
 To estimate it, 
 we make the decomposition as follows:
 \begin{align*}
I^h=&\mathbb{E} \int_{0}^{t}(b^h\cdot \nabla u^t)(s,X_{\pi_h(s)}^h) -(b^h\cdot \nabla u^t)( s, X_{s}^h)  \dif s\\
&+\mathbb{E} \int_{0}^{t} b^h(s,X_{\pi_h(s)}^h)\cdot\left(\nabla u^t(s,X_{s}^h) -\nabla u^t( s, X_{\pi_h(s)}^h) \right) \dif s,
\end{align*}
 where the first one is bounded by $\int_0^t\|\rho^h_{\pi_h(s)}-\rho^h_s\|_1\|b^h\cdot\nabla u^t(s)\|_\infty\dif s$, which has a convergence rate of $h^{(\alpha-1)/\alpha}$ as shown in Lemma \ref{lem:22}. To estimate the second term, we employ an interpolation approach by dividing the integral over the interval $s \in [0, t]$ into two parts: $s \in [t-h, t]$ and $s \in [0, t-h]$. For $s \in [t-h, t]$, we control the term using $\|\nabla^t u(s)\|_\infty \lesssim (t-s)^{-1/\alpha}$, and obtain the convergence rate from the smallness of the integral interval. For $s \in [0, t-h]$, we apply the result established in Lemma \ref{lem:0425}, which crucially depends on the heat kernel estimate from \cite{wu2023well}, to control the term by $h(t-s)^{-(1+\alpha)/\alpha}$. This control includes an additional rate factor $h$ and an integrable function $s \mapsto (t-s)^{-(1+\alpha)/\alpha}$ over $[0, t-h]$. Further details can be found in the estimate for the term $\sI_{42}$ in Section \ref{sec3}.
 



\subsection{
Relevant literature}
The convergence properties of the Euler-Maruyama scheme, when applied to SDEs driven by both Brownian motion and L\'evy processes with smooth coefficients, have undergone extensive scrutiny and are thoroughly expounded upon in the literature. Recent research has been particularly focused on investigating the strong (path) convergence rates for SDEs with irregular coefficients, such as bounded and $L^p$ drift terms. This renewed interest has been motivated by insights into the regularity properties of the diffusion semigroup.


In the case of SDEs driven by additive Brownian motion noise, Dareiotis and Gerencs\'er in \cite{DG20} provided an $L^2$ moment rate of $1/2-$ (interpreted as $1/2-\eps$ for any small $\eps>0$) for any Dini-continuous drift. Meanwhile, for SDEs with bounded measurable drift, the total variation rate of convergence was established as $1/2$ in \cite{BJ20}, coinciding with the rate derived in our work for $\alpha=2$. Additionally, as the stochastic sewing lemma, introduced by L\^e in \cite{Le20}, has gained popularity for analyzing the convergence rate of the Euler-Maruyama scheme with singular coefficients, an array of developments relying on it appeared, including the study of SDEs with Sobolev coefficients \cite{DGL23}, $L^p$ $(p>d)$ drifts \cite{LL24}, and those driven by fractional Brownian motion \cite{BDG21}. We also direct readers to the references provided therein for relevant literature.


Regarding the $\alpha$-stable cases, Mikulevi$\check{\rm c}$ius and Xu in \cite{MX20} obtained the strong convergence rate under $\alpha\in(1,2)$ and $\beta$-H\"older drifts with $\beta>1-\alpha/2$. This result was extended to the case $\alpha<1$ in \cite{LZ23} (see also \cite{MPT17, HL18, KS19} for related results). Notably, the convergence rates derived in these studies rely upon the regularity of the drift coefficient and deteriorate, approaching zero, as 
$\beta$ tends to zero. In contrast, using the stochastic sewing technique, Butkovsky, Dareiotis, and Gerencs\'er established a strong convergence rate of $1/2+$ for SDEs driven by additive $\alpha$-stable processes with $\beta$-H\"older continuous drifts, as detailed in \cite{BDG22}. When $b$ is independent of the distributional density $\rho_t(x)$, we also refer to the very recent article \cite{FJM24} on taming Euler scheme with $L^q-L^p$ drift, which achieves a convergence rate of order $\frac{1}{\alpha} \left(\alpha-1-\left(\frac{\alpha}{q}+\frac{d}{p} \right) \right),$ for the difference of the densities.

\subsection{Conventions and notations}
We close this section by introducing the following conventions and notations used throughout this paper: As usual, we use $:=$ as a way of definition. Define $\mN_0:= \mN \cup \{0\}$ and $\mR_+:=[0,\infty)$. The letter $c=c(\cdots)$ denotes an unimportant constant, whose value may change in different places. We use $A \asymp B$ and $A\lesssim B$ to denote $c^{-1} B \leq A \leq c B$ and $A \leq cB$, respectively, for some unimportant constant $c \geq 1$.  We also use $A   \lesssim_c  B$ to denote $A \leq c B$ when we want to emphasise the constant.
	

	\section{Preliminaries}
	\subsection{$\alpha$-stable processes}
	A c\`adl\`ag process $\{ L_t; t \ge 0\}$ on $\mathbb{R}^d$ is called a L\'evy process, if $L_0=0$ almost surely and $L$ has independent and identically distributed increments. The associated Poisson random measure is defined by
	\begin{equation*}
		N((0,t] \times \Gamma):=\sum_{s \in (0,t]} \mathbf{1}_{\Gamma} (L_s-L_{s-}), \ \Gamma \in \mathcal{B}(\mathbb{R}^d / \{0\}), \  t>0,
	\end{equation*}
	and the L\'evy measure is given by 
	\begin{equation*}
		\nu(\Gamma) :=\mathbb{E}N((0,1] \times \Gamma).
	\end{equation*}
	Then, the compensated Poisson random measure is defined by
	\begin{equation*}
		\widetilde{N}(\dif r, \dif z) :=N(\dif r, \dif z)-\nu(\dif z) \dif r.
	\end{equation*}
	For $\alpha \in (0,2),$ a L\'evy process $L_t$ is called a symmetric and rotationally invariant $\alpha$-stable process if the L\'evy measure has the form 
	\begin{equation*}
		\nu^{(\alpha)} (\dif z)= |z|^{-d-\alpha} \dif z.
	\end{equation*}
	In this paper, we only consider the symmetric and rotationally invariant $\alpha$-stable process. Without confusing, we simply call it the $\alpha$-stable process. For any $0 \le \gamma_1 < \alpha < \gamma_2,$ it is easy to see that 
	\begin{equation*}
		\int_{\mathbb{R}^d} (|z|^{\gamma_1} \wedge |z|^{\gamma_2}) \nu^{(\alpha)}(\dif z) < \infty.
	\end{equation*}
	Based on \cite[Proposition 28.1]{ken1999levy}, $L_t$ admits a smooth density function $q_{\alpha}(t, \cdot)$ given by Fourier's inverse transform
	\begin{equation*}
		q_{\alpha}(t,x)=(2 \pi)^{-d/2} \int_{\mathbb{R}^d} e^{-ix \cdot \xi} \mathbb{E} e^{i \xi \cdot L_t} \dif \xi, \quad \forall t >0.
	\end{equation*}
	Since the $\alpha$-stable process $L_{t}$ has the scaling property
	\begin{equation*}
		\left(\lambda^{-1 / \alpha} L_{\lambda t}\right)_{t \geqslant 0} \stackrel{(d)}{=}\left(L_{t}\right)_{t \geqslant 0}, \quad \forall \lambda>0,
	\end{equation*}
	it is easy to see that
	\begin{equation}\label{0422:00}
		q_{\alpha}(t, x)=t^{-d / \alpha} q_{\alpha}\left(1, t^{-1 / \alpha} x\right) .
	\end{equation}
	By \cite[Theorem 2.1]{blumenthal1960some}, one knows that there is a constant $c=c(d, \alpha)>1$ such that
	\begin{equation}\label{0423:00}
		c^{-1} \varrho_{\alpha}(t, x) \leqslant q_{\alpha}(t, x) \leqslant c \varrho_{\alpha}(t, x),
	\end{equation}
	where 
	\begin{equation*}
		\varrho_{\alpha}(t, x):=\frac{t}{\left(t^{1 / \alpha}+|x|\right)^{d+\alpha}} .
	\end{equation*}
	By \cite[Lemma 2.2]{chen2016heat}, for any $j \in \mathbb{N}_{0}$, there is a constant $c=c(d, j, \alpha)>0$ such that
	\begin{equation} \label{nabla-p}
		\left|\nabla^{j} q_{\alpha}(t, x)\right| \leqslant c t^{-j / \alpha} \varrho_{\alpha}(t, x) .
	\end{equation}
	Thus, in view of \eqref{0422:00}, we have for any $j\in \mathbb{N}_{0}$ and $q\in[1,\infty]$,
	\begin{equation}\label{estimate-varro}
		\| \nabla^j q_\alpha (t) \|_{L^{q/{q-1}}}= t^{-\frac{d}{\alpha q}-\frac{j}{\alpha}}\|\nabla^j q_\alpha (1)\|_{L^{q/(q-1)}}\lesssim t^{-\frac{d}{\alpha q}-\frac{j}{\alpha}}.
	\end{equation}
	Note that $q_{\alpha}(t, x)$ is the heat kernel of the operator $\Delta^{\alpha / 2}$, i.e.,
	\begin{equation}\label{0423:02}
		\partial_{t} q_{\alpha}(t, x)=\Delta^{\alpha / 2} q_{\alpha}(t, x), \qquad \lim _{t \downarrow 0} q_{\alpha}(t, x)=\delta_{0}(x),
	\end{equation}
	where $\delta_{0}$ is the Dirac measure at point $ 0 $. We also have the following Chapman-Kolmogorov 
	equations:
	\begin{equation} \label{CK}
		\left(q_{\alpha}(t) * q_{\alpha}(s)\right)(x)=\int_{\mathbb{R}^{d}} q_{\alpha}(t, x-y) q_{\alpha}(s, y) \mathrm{d} y=q_{\alpha}(t+s, x), \ \  t, s>0 .
	\end{equation}
		\subsection{Auxiliary lemmas}
		In this part, we recall and extend some lemmas in \cite{wu2023well} and \cite{hao2023} for later use. In concrete terms, the following Lemma \ref{lem:0425} plays an important role in the estimation of $\mathscr{I}_4$ in Section \ref{sec3}. Moreover, when the driven noise is the Brownian motion, the following lemma is similar to \cite[Lemma 7.9]{hao2023}.
		\begin{lemma}\label{lem:0425}
			There is a constant $C=C(d)$ such that for any $f_1, \nabla^k f_2 \in L^{\infty} (\mathbb{R}^d)$ with $k=1,2, h \in (0,1)$ and $s>h,$
			\begin{align} \label{lem2.1}
				&\left| \mathbb{E}f_1(X_{\pi_h(s)}^h)(f_2(X_s^h)-f_2(X_{\pi_h(s)}^h))   \right|   \notag\\
				&\le Ch \|f_1\|_{\infty} \left( \|\nabla f_2 \|_{\infty} \|b\|_{\infty} +  \int_{\mathbb{R}^d} [\|f_2\|_{\infty} \wedge (|y|^2 \| \nabla^2 f_2 \|_{\infty} )] \frac{1}{|y|^{d+\alpha}} \dif y \right).
			\end{align}
		\end{lemma}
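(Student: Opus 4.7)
The plan is to exploit that $f_1(X^h_{\pi_h(s)})$ is $\cF_{\pi_h(s)}$-measurable and to apply It\^o's formula to $f_2(X^h_r)$ on the small interval $[\pi_h(s),s]$, whose length is at most $h$. Set $t_0:=\pi_h(s)$; since $s>h$ we have $t_0\ge h$, so that for $\tau\in[t_0,s)$ we have $\pi_h(\tau)=t_0$ and the frozen drift $b^h(\tau,X^h_{t_0})$ is $\cF_{t_0}$-measurable with pointwise bound $\|b\|_\infty$. Consequently,
$$X^h_r=X^h_{t_0}+\int_{t_0}^{r} b^h(\tau,X^h_{t_0})\dif\tau+(L_r-L_{t_0}),\qquad r\in[t_0,s].$$

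Next, I would apply It\^o's formula to $f_2(X^h_r)$, separating the jumps of $L$ via the compensated Poisson random measure $\widetilde N$:
$$f_2(X^h_s)-f_2(X^h_{t_0})=\int_{t_0}^{s}\nabla f_2(X^h_r)\cdot b^h(r,X^h_{t_0})\dif r+\int_{t_0}^{s}\Delta^{\alpha/2}f_2(X^h_{r-})\dif r+M_s,$$
where $M_s:=\int_{t_0}^{s}\!\!\int_{\mR^d\setminus\{0\}}[f_2(X^h_{r-}+z)-f_2(X^h_{r-})]\widetilde N(\dif r,\dif z)$ is a martingale with $M_{t_0}=0$; its martingale property is standard because $|f_2(x+z)-f_2(x)|\le\min(2\|f_2\|_\infty,|z|\|\nabla f_2\|_\infty)$ is square integrable against $\nu^{(\alpha)}$ when $\alpha\in(1,2)$. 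Multiplying by the $\cF_{t_0}$-measurable factor $f_1(X^h_{t_0})$ and taking expectations kills the martingale term.

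The drift contribution is immediately bounded by $\|f_1\|_\infty\|\nabla f_2\|_\infty\|b\|_\infty (s-t_0)\le h\|f_1\|_\infty\|\nabla f_2\|_\infty\|b\|_\infty$, which produces the first term on the right of \eqref{lem2.1}. For the non-local term, I would use the symmetric form of the fractional Laplacian,
$$\Delta^{\alpha/2}g(x)=\tfrac{c}{2}\int_{\mR^d}\bigl[g(x+y)+g(x-y)-2g(x)\bigr]|y|^{-d-\alpha}\dif y,$$
and the elementary estimate $|g(x+y)+g(x-y)-2g(x)|\le\min(4\|g\|_\infty,|y|^2\|\nabla^2 g\|_\infty)$ (Taylor's theorem plus the triangle inequality) to obtain the pointwise bound
$$|\Delta^{\alpha/2}f_2(x)|\lesssim\int_{\mR^d}\min\bigl(\|f_2\|_\infty,|y|^2\|\nabla^2 f_2\|_\infty\bigr)|y|^{-d-\alpha}\dif y,$$
which, combined with the factor $s-t_0\le h$ and $\|f_1\|_\infty$, yields the second term.

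The only mildly delicate point is the legitimacy of It\^o's formula when $f_2$ is only assumed to have $\nabla^k f_2\in L^\infty$ for $k=1,2$ rather than to be genuinely $C^2$. I expect this to be the main technical obstacle, but it is handled in the standard way: mollify $f_2$ to a sequence $f_2^\eps\in C_b^\infty$, apply It\^o's formula to $f_2^\eps(X^h_r)$, and pass to the limit $\eps\downarrow 0$ using the uniform pointwise bounds on $\nabla f_2^\eps$, $\nabla^2 f_2^\eps$ and $\Delta^{\alpha/2}f_2^\eps$ together with dominated convergence. No other difficulty appears, since after the Itô step the argument is a one-shot Taylor-type estimate on an interval of length $\le h$.
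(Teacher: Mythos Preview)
Your approach is correct and reaches the desired bound, but it takes a genuinely different route from the paper. The paper does not invoke It\^o's formula at all; instead it uses the independence of $L_s-L_{\pi_h(s)}$ from $\cF_{\pi_h(s)}$ to write the expectation explicitly as the double integral
\[
\int_{\mR^{2d}} f_1(x)\Bigl(f_2\bigl(x+\textstyle\int_{\pi_h(s)}^{s}b^h(r,x)\dif r+y\bigr)-f_2(x)\Bigr)\rho^h_{\pi_h(s)}(x)\,q_\alpha(s-\pi_h(s),y)\,\dif x\,\dif y,
\]
splits $f_2(x+\int b^h\dif r+y)-f_2(x)$ into a drift piece (controlled by $\|\nabla f_2\|_\infty\|b\|_\infty h$) and a pure-noise piece $f_2(x+y)-f_2(x)$, symmetrises the latter in $y$, and then applies the pointwise heat-kernel upper bound $q_\alpha(t,y)\lesssim t\,|y|^{-d-\alpha}$ from \eqref{0423:00} to extract the factor $h$ together with the integral $\int[\|f_2\|_\infty\wedge|y|^2\|\nabla^2 f_2\|_\infty]\,|y|^{-d-\alpha}\dif y$. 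Your It\^o-formula argument is arguably more direct: the factor $h$ comes simply from the length of the time interval $[t_0,s]$, and the $y$-integral appears as a uniform pointwise bound on the generator $\Delta^{\alpha/2}f_2$, so you bypass the heat-kernel estimate \eqref{0423:00} entirely. Conversely, the paper's density computation avoids the martingale bookkeeping and the mollification step you flag, since all manipulations are deterministic once the integral representation is written down.
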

		\begin{proof}
			Since $L_s-L_{\pi_h(s)}$ is independent of $X_{\pi_h(s)}^h,$ by the construction \eqref{Euler2}, one sees that 
			\begin{align*}
				\mathscr{I}_h &:=\mathbb{E}f_1(X_{\pi_h(s)}^h) (f_2(X_s^h)-f_2(X_{\pi_h(s)}^h))  \\
				& =\int_{\mathbb{R}^{2d}} f_1(x) \cdot \left(f_2 \left(x+\int_{\pi_h(s)}^{s} b^h(r,x) \dif r +y \right) -f_2(x) \right) \rho_{\pi_h(s)}^h (x) q_{\alpha}(s-\pi_h(s), y) \dif x \dif y.
			\end{align*}
			Then we have
			\begin{align*}
				|\mathscr{I}_h| \le& \|f_1\|_{\infty} \int_{\mathbb{R}^{2d}} \left| f_2 \left( x+\int_{\pi_h(s)}^{s} b^h(r,x) \dif r+y \right) -f_2(x+y)  \right| \rho_{\pi_h(s)}^h (x) q_{\alpha} (s-\pi_h(s), y) \dif x \dif y \\
				& + \left| \int_{\mathbb{R}^{2d}} f_1(x) \cdot \left( f_2(x+y)-f_2(x)\right) \rho_{\pi_h(s)}^h (x) q_{\alpha} (s-\pi_h(s),y) \dif x \dif y \right| \\
				=:&\mathscr{I}_1+\mathscr{I}_2.
			\end{align*}
			For $\mathscr{I}_1$, it is easy to see that 
			\begin{align*}
				\mathscr{I}_1 & \le \|f_1\|_{\infty} \|\nabla f_2\|_{\infty} \int_{\mathbb{R}^{2d}} \int_{\pi_h(s)}^{s} |b^h(r,x)| \dif r \rho_{\pi_h(s)}^h(x) q_{\alpha}(s-\pi_h(s), y) \dif x \dif y \\
				& \le h \|f_1\|_{\infty} \|\nabla f_2\|_{\infty} \|b\|_{\infty} \int_{\mathbb{R}^{2d}} \rho_{\pi_h(s)}^h (x) q_{\alpha} (s-\pi_h(s),y) \dif x \dif y  = h \|f_1\|_{\infty} \|\nabla f_2\|_{\infty} \|b\|_{\infty}.
			\end{align*}
			For $\mathscr{I}_2,$ it  follows from the symmetry of $q_{\alpha}(t, \cdot)$ that
			\begin{align*}
				\mathscr{I}_2 & =\frac{1}{2} \left| \int_{\mathbb{R}^{2d}} f_1(x) \cdot \left( f_2(x+y)+f_2(x-y)-2f_2(x) \right) \rho_{\pi_h(s)}^h(x) q_{\alpha}(s-\pi_h(s),y) \dif x \dif y   \right|,
			\end{align*}
			where 
			\begin{align*}
				&\int_{\mathbb{R}^d} | f_2(x+y)+f_2(x-y)-2f_2(x) | q_{\alpha}(t,y) \dif y \\
				& \le \int_{\mathbb{R}^d} [ \| f_2\|_{\infty} \wedge (|y|^2 \| \nabla^2 f_2 \|_{\infty} )   ] q_{\alpha} (t,y) \dif y \\
				&  \overset{\eqref{0423:00}}{\lesssim}\int_{\mathbb{R}^d} [\| f_2 \|_{\infty} \wedge (|y|^2 \| \nabla^2 f_2 \|_{\infty} ) ] t^{-\frac{d}{\alpha}} \frac{1}{\left(1+| t^{-\frac{1}{\alpha}} y| \right)^{d+\alpha}} \dif y \\
				& \le \int_{\mathbb{R}^d} [\| f_2 \|_{\infty} \wedge (|y|^2 \| \nabla^2 f_2 \|_{\infty})] t^{-\frac{d}{\alpha}} \frac{1}{t^{-\frac{1}{\alpha}(d+\alpha)} |y|^{d+\alpha}} \dif y \\
				& =t \int_{\mathbb{R}^d} [\| f_2 \|_{\infty} \wedge (|y|^2 \| \nabla^2 f_2 \|_{\infty})] \frac{1}{|y|^{d+\alpha}} \dif y.
			\end{align*}
			Therefore,
			\begin{align*}
				\mathscr{I}_2 \lesssim h \|f_1\|_{\infty} \int_{\mathbb{R}^d} [\|f_2\|_{\infty} \wedge (|y|^2 \| \nabla^2 f_2 \|_{\infty} )] \frac{1}{|y|^{d+\alpha}} \dif y.
			\end{align*}
			This completes the proof.
		\end{proof}

		Next, we present the following useful 
		lemma which gives the time H\"older regularity of $\rho^h$.
		\begin{lemma}\label{lem:22}
			Let $\mu_{0}(\dif x)=\mathbb{P} \circ X_0^{-1}(\dif x)$ be the distribution of $X_0$ and $\alpha \in (1,2).$ 
			For any $T >0 $ and $\beta \in (0, \alpha-1],$  there is a constant $c=c(d, \alpha, T, \|b\|_{\infty}, \beta)>0$ such that for all $N \in \mathbb{N}, $ $t_1, t_2 \in (0,T]$ and $y \in \mathbb{R}^d,$
			\begin{equation} \label{alpha-1}
				|\rho^h_{t_2}(y)-\rho^h_{t_1}(y)| \le c |t_2-t_1|^{\beta/\alpha} \sum_{i=1,2} t_i^{-\beta/\alpha} \int_{\mathbb{R}^d} q_{\alpha}(t_i, x-y) \mu_{0}(\dif x).
			\end{equation}
		\end{lemma}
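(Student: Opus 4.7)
The plan is to derive a Duhamel-type mild formula for $\rho^h$ and then exploit the H\"older-in-time regularity of the $\alpha$-stable kernel $q_\alpha$, in analogy with the Brownian case in \cite[Lemma~7.9]{hao2023}. First, applying It\^o's formula to $\mE\phi(X_t^h)$ for test $\phi\in C_0^\infty(\mR^d)$ and using that $L_s-L_{\pi_h(s)}$ is independent of $X_{\pi_h(s)}^h$, the conditional density of $X_s^h$ given $X_{\pi_h(s)}^h=x$ equals $q_\alpha(s-\pi_h(s),\,\cdot-x-c_x^s)$ with deterministic shift $c_x^s:=\int_{\pi_h(s)}^s b^h(r,x)\dif r$ of size at most $\|b\|_\infty h$. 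Integration by parts and Chapman--Kolmogorov \eqref{CK} then yield the mild representation
\begin{equation*}
\rho_t^h(y)=(q_\alpha(t)*\mu_0)(y)-\int_0^t\!\!\int_{\mR^d}\rho_{\pi_h(s)}^h(x)\,b^h(s,x)\cdot \nabla_y q_\alpha\bigl(t-\pi_h(s),y-x-c_x^s\bigr)\dif x\,\dif s.
\end{equation*}

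Next I would subtract the representations at $t_1$ and $t_2$ and split the difference into three pieces: an initial-data piece $[(q_\alpha(t_2)-q_\alpha(t_1))*\mu_0](y)$, an interior piece integrating $s\in[0,t_1]$ against $\nabla q_\alpha(t_2-\pi_h(s),\cdot)-\nabla q_\alpha(t_1-\pi_h(s),\cdot)$, and a boundary piece integrating $s\in[t_1,t_2]$ against $\nabla q_\alpha(t_2-\pi_h(s),\cdot)$. The two pointwise inputs I would use are $|q_\alpha(t_2,z)-q_\alpha(t_1,z)|\lesssim|t_2-t_1|^{\beta/\alpha}\sum_{i=1,2}t_i^{-\beta/\alpha}\varrho_\alpha(t_i,z)$ and $|\nabla q_\alpha(t_2,z)-\nabla q_\alpha(t_1,z)|\lesssim|t_2-t_1|^{\beta/\alpha}\sum_i t_i^{-(1+\beta)/\alpha}\varrho_\alpha(t_i,z)$, both derived by interpolating \eqref{nabla-p} with the mean-value estimate coming from $\p_t q_\alpha=\Delta^{\alpha/2}q_\alpha$. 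To absorb the $x$-integration I would substitute the heat-kernel upper bound $\rho_s^h(x)\lesssim \int q_\alpha(s,x-z)\mu_0(\dif z)$ from \cite{wu2023well} and apply a shifted Chapman--Kolmogorov inequality $\int q_\alpha(\pi_h(s),x-z)\varrho_\alpha(t_i-\pi_h(s),y-x-c_x^s)\dif x\lesssim q_\alpha(t_i,y-z)$, justified by $|c_x^s|\le\|b\|_\infty h$ and the two-sided comparison \eqref{0423:00}.

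After these substitutions the task reduces to controlling one-dimensional time integrals: the boundary piece contributes $\int_{t_1}^{t_2}(t_2-s)^{-1/\alpha}\dif s\asymp(t_2-t_1)^{(\alpha-1)/\alpha}$, and the interior piece contributes $\int_0^{t_1}(t_1-s)^{-(1+\beta)/\alpha}\dif s\asymp t_1^{(\alpha-1-\beta)/\alpha}$. Writing $(t_2-t_1)^{(\alpha-1-\beta)/\alpha}\le T^{(\alpha-1-\beta)/\alpha}$ and $t_1^{(\alpha-1-\beta)/\alpha}\le T^{(\alpha-1)/\alpha}\,t_1^{-\beta/\alpha}$, each of the three pieces is dominated by $C(T,\|b\|_\infty)|t_2-t_1|^{\beta/\alpha}\sum_{i=1,2}t_i^{-\beta/\alpha}\int q_\alpha(t_i,x-y)\mu_0(\dif x)$, which is the asserted estimate \eqref{alpha-1}.

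The main obstacle is the integrability threshold $\beta<\alpha-1$ in the interior-piece integral: the endpoint $\beta=\alpha-1$ allowed by the statement is precisely the borderline case of this direct Duhamel argument. I would handle the endpoint either by proving the estimate uniformly for all $\beta'<\alpha-1$ and invoking monotonicity of the right-hand side in $\beta$ on bounded intervals, or by refining the gradient-in-time H\"older estimate of $q_\alpha$ in the regime $t_1\asymp t_2$ so as to absorb the borderline logarithmic loss into a larger $T$-dependent constant. A secondary technical issue is that the shift $c_x^s$ inside Chapman--Kolmogorov depends on the variable of integration $x$; this is controlled by splitting the $x$-integration into the regimes $|y-x|\le 2|c_x^s|$ and $|y-x|>2|c_x^s|$ and applying the elementary doubling of $\varrho_\alpha$ in each region.
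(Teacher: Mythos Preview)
Your overall strategy---Duhamel formula plus a three-piece split into initial-data, interior, and boundary contributions---matches the paper's proof, and your treatment of the initial-data and boundary pieces is correct. The genuine gap is at the endpoint $\beta=\alpha-1$, which is precisely the only case the paper proves here (the range $\beta<\alpha-1$ is already \cite[Corollary~4.4]{wu2023well}). Your interior-piece argument applies the gradient time-H\"older bound at exponent $\beta$, producing $\int_0^{t_1}(t_1-s)^{-(1+\beta)/\alpha}\dif s=\frac{\alpha}{\alpha-1-\beta}\,t_1^{(\alpha-1-\beta)/\alpha}$, whose prefactor diverges as $\beta\uparrow\alpha-1$. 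Neither proposed fix closes this: the ``uniform over $\beta'<\alpha-1$'' route cannot be uniform because this very constant blows up, and ``absorb the borderline logarithmic loss'' is not made concrete.

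The paper's resolution is an interpolation trick you are missing. For the interior piece one applies \eqref{lem2.3} with exponent $1$ (not $\alpha-1$) and pairs it with the plain gradient bound \eqref{nabla-p} to obtain
\[
|\nabla q_\alpha(t_2-s,z)-\nabla q_\alpha(t_1-s,z)|\lesssim \Bigl(\bigl[|t_2-t_1|^{1/\alpha}(t_1-s)^{-2/\alpha}\bigr]\wedge (t_1-s)^{-1/\alpha}\Bigr)\sum_{i=1,2}q_\alpha(t_i-s,z).
\]
The resulting time integral $\int_0^{t_1}\bigl([|t_2-t_1|^{1/\alpha}r^{-2/\alpha}]\wedge r^{-1/\alpha}\bigr)\dif r$ is handled by the substitution $r=(t_2-t_1)u$, which factors out $|t_2-t_1|^{(\alpha-1)/\alpha}$ and leaves the convergent integral $\int_0^\infty u^{-1/\alpha}(u^{-1/\alpha}\wedge 1)\,\dif u$. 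As a secondary simplification, the paper's Duhamel formula \eqref{Duhamel} places the kernel at $(t-s,X_s^h-y)$ rather than at a shifted point with time $t-\pi_h(s)$; feeding in the density upper bound \eqref{uniform-estimate} for $\rho_s^h$ and applying Chapman--Kolmogorov \eqref{CK} then requires no shift at all, so your $x$-dependent shift issue never arises.
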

		In light of \eqref{alpha-1} with $\beta=\alpha-1$, by Young's inequality, one sees that if $\mu_0$ admits a density $\rho_0\in L^q(\mR^d)\cap L^1(\mR^d)$, then for any $p \in [1,q]$ and $0 < s <t \le T,$
			\begin{align} \label{10}
				\| \rho_s^h-\rho_t^h\|_{p} 
				& \lesssim  |t-s|^{\frac{\alpha-1}\alpha} s^{-\frac{\alpha-1}\alpha} (\|q_{\alpha}(t)\|_{1} + \| q_{\alpha}(s)\|_{1}) \| \rho_0 \|_{p} \notag \\
				& \lesssim |t-s|^{\frac{\alpha-1}\alpha} s^{-\frac{\alpha-1}\alpha} \| \rho_0 \|_{p}.
			\end{align}
			
		It should be noted that Lemma \ref{lem:22} is proved in \cite[Corollary 4.4]{wu2023well} for $\beta\in(0,\alpha-1)$. Since we need to use the critical case $\beta=\alpha-1$ in next section, we show it in the following.
		To this end, we first introduce the following lemma from \cite[Lemma 2.3]{wu2023well}.
		\begin{lemma}
			For any $\beta \in(0, \alpha)$ and $j \in \mathbb{N}_{0}$, there is a constant $c=c(d, \alpha, \beta, j)>0$ such that for every $t_{1}, t_{2}>0$ and $x \in \mathbb{R}^{d}$,
			\begin{equation} \label{lem2.3}
				\left|\nabla^{j} q_{\alpha}\left(t_{1}, x\right)-\nabla^{j} q_{\alpha}\left(t_{2}, x\right)\right| \leqslant c\left|t_{2}-t_{1}\right|^{\beta / \alpha}\left(t_{1}^{-(j+\beta) / \alpha} q_{\alpha}\left(t_{1}, x\right)+t_{2}^{-(j+\beta) / \alpha} q_{\alpha}\left(t_{2}, x\right)\right) .
			\end{equation}
		\end{lemma}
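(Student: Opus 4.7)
The plan is to split into two regimes depending on the ratio of $|t_2-t_1|$ to $t_1$; WLOG take $0 < t_1 < t_2$.

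\emph{Large-increment regime} ($t_2-t_1 \geq t_1$): Then $t_i \leq 2(t_2-t_1)$ for $i=1,2$, so $(t_2-t_1)^{\beta/\alpha} t_i^{-\beta/\alpha} \gtrsim 1$. The triangle inequality together with the pointwise bound $|\nabla^j q_\alpha(t_i, x)| \lesssim t_i^{-j/\alpha} q_\alpha(t_i, x)$, which follows from \eqref{nabla-p} and \eqref{0423:00}, gives the desired estimate directly.

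\emph{Small-increment regime} ($t_2-t_1 < t_1$): Here $s \asymp t_1 \asymp t_2$ and $q_\alpha(s,x) \asymp q_\alpha(t_1,x)$ for all $s \in [t_1,t_2]$, since $s^{1/\alpha}+|x| \asymp t_1^{1/\alpha}+|x|$. Using the fundamental theorem of calculus and \eqref{0423:02}, I would write
\begin{equation*}
\nabla^j q_\alpha(t_2, x) - \nabla^j q_\alpha(t_1, x) = \int_{t_1}^{t_2} \Delta^{\alpha/2} \nabla^j q_\alpha(s, x) \, ds.
\end{equation*}
The crucial pointwise bound to establish is $|\Delta^{\alpha/2} \nabla^j q_\alpha(s, x)| \lesssim s^{-1-j/\alpha} q_\alpha(s, x)$. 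I would prove this by expressing $\Delta^{\alpha/2} f(\cdot) = \tfrac{c}{2}\int (f(\cdot+z)+f(\cdot-z)-2f(\cdot))|z|^{-d-\alpha}\,dz$ and splitting at the scale $|z| \asymp s^{1/\alpha}$. For $|z| \leq s^{1/\alpha}$, a second-order Taylor expansion combined with \eqref{nabla-p} applied to $\nabla^{j+2} q_\alpha$ gives $|\nabla^j q_\alpha(s,x+z)+\nabla^j q_\alpha(s,x-z)-2\nabla^j q_\alpha(s,x)| \lesssim |z|^2 s^{-(j+2)/\alpha} \varrho_\alpha(s, x)$, which after integration against $|z|^{-d-\alpha}$ produces the scale-invariant contribution $s^{-1-j/\alpha}\varrho_\alpha(s,x)$. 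For $|z| > s^{1/\alpha}$, use the absolute bound from \eqref{nabla-p}: the diagonal piece $\varrho_\alpha(s,x)\int_{|z|>s^{1/\alpha}}|z|^{-d-\alpha}\,dz \asymp s^{-1}\varrho_\alpha(s,x)$ is immediate, while the shifted piece $\int_{|z|>s^{1/\alpha}}\varrho_\alpha(s,x+z)|z|^{-d-\alpha}\,dz$ is handled by a further split at $|z| \asymp |x|$ (relevant when $|x| \gg s^{1/\alpha}$), both contributions giving $\lesssim s^{-1}\varrho_\alpha(s,x)$; an appeal to \eqref{0423:00} then converts $\varrho_\alpha$ into $q_\alpha$.

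With this key bound in hand, integration over $s \in [t_1,t_2]$ together with $q_\alpha(s,x) \asymp q_\alpha(t_1,x)$ yields
\begin{equation*}
|\nabla^j q_\alpha(t_2, x) - \nabla^j q_\alpha(t_1, x)| \lesssim q_\alpha(t_1, x) \int_{t_1}^{t_2} s^{-1-j/\alpha} \,ds \lesssim (t_2-t_1)\, t_1^{-1-j/\alpha} q_\alpha(t_1, x),
\end{equation*}
and the elementary interpolation $t_2-t_1 \leq (t_2-t_1)^{\beta/\alpha}\, t_1^{1-\beta/\alpha}$ (valid since $\beta \in (0,\alpha)$ and $t_2-t_1 \leq t_1$) converts this into the required bound $\lesssim (t_2-t_1)^{\beta/\alpha}\, t_1^{-(j+\beta)/\alpha}\, q_\alpha(t_1, x)$. \emph{Main obstacle:} establishing the pointwise estimate with the full $(s^{1/\alpha}+|x|)^{-(d+\alpha)}$ heat-kernel decay is the delicate step. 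A direct time differentiation of the scaling identity $\nabla^j q_\alpha(s,x) = s^{-(d+j)/\alpha}(\nabla^j q_\alpha)(1,s^{-1/\alpha}x)$ only produces decay of order $(s^{1/\alpha}+|x|)^{-(d+\alpha-1)}$, losing a factor $|x|/s^{1/\alpha}$ in the far field; the sharp decay must be extracted from cancellations in the symmetric second difference defining $\Delta^{\alpha/2}$, together with a careful treatment of the convolution tail at the scale $|z| \asymp |x|$.
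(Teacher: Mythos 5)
The paper itself does not prove this estimate: it is quoted verbatim from \cite[Lemma 2.3]{wu2023well}, so there is no in-paper argument against which to compare. Evaluated on its own merits, however, your proof is correct and complete. The dichotomy at $t_2-t_1\asymp t_1$ is the right one; in the large-increment regime the triangle inequality together with \eqref{nabla-p} and \eqref{0423:00} suffices since $(t_2-t_1)^{\beta/\alpha}t_i^{-\beta/\alpha}\gtrsim 1$; in the small-increment regime $q_\alpha(s,\cdot)\asymp q_\alpha(t_1,\cdot)$ uniformly for $s\in[t_1,t_2]$, and the whole burden falls on the scale-invariant pointwise bound $|\Delta^{\alpha/2}\nabla^j q_\alpha(s,x)|\lesssim s^{-1-j/\alpha}q_\alpha(s,x)$. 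Your derivation of that bound via the symmetric second difference, split at $|z|\asymp s^{1/\alpha}$ and again (in the far field) at $|z|\asymp|x|$, is sound: the near-field Taylor piece converges because $\alpha<2$, the diagonal tail is immediate, and for the shifted tail you correctly bound $\int_{|z|>|x|/2}\varrho_\alpha(s,x\pm z)|z|^{-d-\alpha}\,\dif z\lesssim |x|^{-d-\alpha}\int_{\mR^d}\varrho_\alpha(s,w)\,\dif w\lesssim s^{-1}\varrho_\alpha(s,x)$, using that $\int\varrho_\alpha(s,\cdot)$ is $s$-independent by scaling. The concluding interpolation $(t_2-t_1)\le(t_2-t_1)^{\beta/\alpha}t_1^{1-\beta/\alpha}$ is valid for all $\beta\in(0,\alpha)$ since $t_2-t_1\le t_1$. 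You also correctly diagnose why the naive route — differentiating the scaling identity in $s$ — stalls if one has only \eqref{nabla-p}: the resulting $s^{-1/\alpha}x\cdot\nabla^{j+1}q_\alpha$ term loses a factor $|x|/s^{1/\alpha}$ in the far field; it would require the sharper decay $|\nabla^j q_\alpha(1,y)|\lesssim(1+|y|)^{-d-\alpha-j}$ (true for the isotropic stable density, but not contained in \eqref{nabla-p}). Passing through the nonlocal operator sidesteps that need, and is in fact the standard device for such time-H\"older estimates; it is very likely close in spirit to the argument of \cite{wu2023well}.
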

		We also recall the following Duhamel's formula from \cite[Lemma 4.1]{wu2023well} and \cite[Remark 4.2]{wu2023well}.
		\begin{lemma}(Duhamel's formula)
			Let $\alpha \in (1,2).$ For each $t \in (0,T]$ and $x \in \mathbb{R}^d,$ the density $\rho^h_t$ satisfying the following Duhamel's formula:
			\begin{equation} \label{Duhamel}
				\rho^h_t(y)=\int_{\mR^d}q_{\alpha}(t,x-y)\mu_0(\dif x)+\int_{0}^{t} \mathbb{E} \left[b^h(s,X^h_{\pi_h(s)}) \cdot \nabla q_{\alpha}(t-s,X_s^h-y) \right] \dif s.
			\end{equation}
		\end{lemma}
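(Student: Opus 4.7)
The plan is to prove \eqref{Duhamel} by the It\^o--Tanaka device: test $X^h$ against the backward-in-time $\alpha$-stable semigroup applied to an arbitrary smooth compactly supported function, and then extract the density by duality. Fix $t\in(0,T]$ and an arbitrary $\phi\in C_c^\infty(\mR^d)$, and set
\[
u(s,x):=(P_{t-s}\phi)(x)=\int_{\mR^d}q_\alpha(t-s,x-z)\phi(z)\dif z,\qquad (s,x)\in[0,t]\times\mR^d.
\]
Integration by parts yields $\nabla^j u(s,x)=\int q_\alpha(t-s,x-z)\nabla^j\phi(z)\dif z$, so $\|\nabla^j u(s,\cdot)\|_\infty\le\|\nabla^j\phi\|_\infty$ \emph{uniformly} in $s\in[0,t]$. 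By \eqref{0423:02}, $u$ solves $\partial_s u+\Delta^{\alpha/2}u=0$ on $[0,t)\times\mR^d$ with continuous extension $u(t,\cdot)=\phi$.

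Apply It\^o's formula for the semimartingale $\dif X_s^h=b^h(s,X^h_{\pi_h(s)})\dif s+\dif L_s$, using that the generator of the $\alpha$-stable noise $L$ is $\Delta^{\alpha/2}$:
\[
u(t,X_t^h)-u(0,X_0)=\int_0^t(\partial_s u+\Delta^{\alpha/2}u)(s,X_s^h)\dif s+\int_0^t b^h(s,X^h_{\pi_h(s)})\cdot\nabla u(s,X_s^h)\dif s+M_t,
\]
where $M_t=\int_0^t\!\!\int_{\mR^d}[u(s,X_{s-}^h+z)-u(s,X_{s-}^h)]\widetilde N(\dif s,\dif z)$. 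The backward PDE annihilates the first integral, and the global bounds $\|u\|_\infty\le\|\phi\|_\infty$, $\|\nabla u\|_\infty\le\|\nabla\phi\|_\infty$ together with $\int(|z|^2\wedge 1)\nu^{(\alpha)}(\dif z)<\infty$ make $M$ a true $L^2$-martingale of zero mean. Taking expectation,
\[
\mE\phi(X_t^h)=\mE P_t\phi(X_0)+\int_0^t\mE\bigl[b^h(s,X^h_{\pi_h(s)})\cdot\nabla P_{t-s}\phi(X_s^h)\bigr]\dif s.
\]

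Unfold $P_t\phi(X_0)=\int\phi(y)q_\alpha(t,X_0-y)\dif y$ and $\nabla P_{t-s}\phi(x)=\int\phi(y)\nabla_x q_\alpha(t-s,x-y)\dif y$, and invoke Fubini---justified by $|b^h|\le\kappa$ and the bound $\int|\nabla q_\alpha(t-s,z)|\dif z\lesssim(t-s)^{-1/\alpha}$ from \eqref{estimate-varro}, integrable on $[0,t]$ since $\alpha>1$---to rewrite the identity as
\[
\int_{\mR^d}\phi(y)\rho_t^h(y)\dif y=\int_{\mR^d}\phi(y)\Bigl\{\int q_\alpha(t,x-y)\mu_0(\dif x)+\int_0^t\mE[b^h(s,X^h_{\pi_h(s)})\cdot\nabla q_\alpha(t-s,X_s^h-y)]\dif s\Bigr\}\dif y.
\]
Since $\phi\in C_c^\infty(\mR^d)$ is arbitrary, the bracket coincides with $\rho_t^h(y)$ for a.e.~$y$, and continuity in $y$ on both sides (via dominated convergence on $q_\alpha$ and $\nabla q_\alpha$) upgrades the equality to every $y\in\mR^d$, giving \eqref{Duhamel}.

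The \emph{main obstacle}---technical rather than conceptual---is to keep $u$ and its spatial derivatives bounded \emph{uniformly up to the endpoint} $s=t$, since $\|\nabla^j q_\alpha(t-s,\cdot)\|_1$ blows up like $(t-s)^{-j/\alpha}$ as $s\uparrow t$. The remedy is precisely the choice $\phi\in C_c^\infty$, which allows all derivatives to be shifted onto $\phi$ by integration by parts. Once this uniform control is secured, the remaining ingredients---It\^o's formula, the martingale property of $M$ (including the L\'evy integrability of the jump integrand for both $|z|\le 1$ and $|z|>1$), Fubini, and the pointwise a.e.~identification---follow directly from $|b^h|\le\kappa$ and the kernel estimates \eqref{nabla-p}--\eqref{estimate-varro}.
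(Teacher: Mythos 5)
Your proof is correct and self-contained. The paper itself does not prove this lemma --- it simply cites \cite[Lemma 4.1 and Remark 4.2]{wu2023well} --- but the route you take (testing against $u(s,\cdot)=P_{t-s}\phi$ for $\phi\in C_c^\infty$, killing the $\partial_s+\Delta^{\alpha/2}$ terms via the backward Kolmogorov equation, taking expectation to eliminate the compensated jump martingale, and then extracting $\rho_t^h$ by duality in $\phi$) is exactly the It\^o--Tanaka mechanism the paper deploys elsewhere (notably in the proof of Theorem \ref{11}), and is almost certainly the proof of the cited lemma as well. The technical points you flag --- uniform-in-$s$ bounds on $\nabla^j u$ secured by shifting derivatives onto $\phi$, square-integrability of the jump martingale via $\int(|z|^2\wedge 1)\,\nu^{(\alpha)}(\dif z)<\infty$, and the Fubini justification from $\|\nabla q_\alpha(t-s)\|_1\lesssim(t-s)^{-1/\alpha}\in L^1(0,t)$ since $\alpha>1$ --- are the right ones and are handled correctly. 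One small remark worth making explicit: the a.e.\ identity in $y$ only shows that $\rho_t^h$ agrees a.e.\ with the continuous right-hand side, so the pointwise statement of the lemma is to be read as holding for the continuous version of the density; this is consistent with how \eqref{Duhamel} is used later (pointwise and in conjunction with the continuous upper bound \eqref{uniform-estimate}).
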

		The following uniform estimate is from \cite[Theorem 4.3]{wu2023well}: there is a constant $c>0$ depending on $d, \alpha, T,\|b\|_{\infty}$ such that for any $h \in (0,1), t \in(0, T]$ and $x, y \in \mathbb{R}^{d}$,
		\begin{equation} \label{uniform-estimate}
			\rho^h_t(y) \le c \int_{\mathbb{R}^d} q_{\alpha}(t,x-y) \mu_0(\dif x)= c \int_{\mathbb{R}^d} q_{\alpha}(t,x-y) \rho_0(x)\dif x,	\end{equation}
		which by Young's inequality and \eqref{estimate-varro} implies that for any $p,q,r\in[1,\infty]$ with $1+1/r=1/p+1/q$,
		\begin{align}\label{0423:04}
			\|\rho^h_t\|_r\lesssim \|q_{\alpha}(t)\|_p\|\rho_0\|_q\lesssim t^{-\frac{d}{\alpha}(\frac1q-\frac1r)}\|\rho_0\|_q.
		\end{align}
		Now we give the 
		\begin{proof}[Proof of Lemma \ref{lem:22}]
			The case of $\beta\in(0,\alpha-1)$ is shown in \cite[Corollary 4.4]{wu2023well}, here we only show it for $\beta=\alpha-1$.
			Without loss of generality, we suppose that $t_1<t_2.$ In view of Duhamel's formula \eqref{Duhamel}, one sees that
			\begin{align*}
				| \rho^h_{t_2}(y)-\rho^h_{t_1}(y)| \le&\int_{\mathbb{R}^d} | q_{\alpha}(t_2,x-y)-q_{\alpha}(t_1,x-y) | \mu_{0}(\dif x)\\
				&+\int_{0}^{t_1} \mathbb{E} \left|b^h(s,X^h_{\pi_h(s)}) \cdot (\nabla q_{\alpha}(t_2-s)-\nabla q_{\alpha}(t_1-s))(X_s^h-y) \right| \dif s\\
				&+\int_{t_1}^{t_2} \mathbb{E} \left|b^h(s,X^h_{\pi_h(s)}) \cdot \nabla q_{\alpha}(t_2-s,X_s^h-y) \right| \dif s\\
				=:&\mathscr{I}_1+\mathscr{I}_2+ \mathscr{I}_3.
			\end{align*}
			For $\mathscr{I}_1,$ by \eqref{lem2.3} with $\beta=\alpha-1$, we have 
			\begin{equation*}
				\mathscr{I}_1 \lesssim |t_2-t_1|^{(\alpha-1)/\alpha} \sum_{i=1,2} t_i^{-(\alpha-1)/\alpha} \int_{\mathbb{R}^d} q_{\alpha}(t_i,x-y) \mu_{0}(\dif x).
			\end{equation*}
			For $\mathscr{I}_2,$ it follows from taking $\beta=1$ in \eqref{lem2.3} that  
			\begin{align*}
				&\quad| \nabla q_{\alpha} (t_2-s,z-y)-\nabla q_{\alpha}(t_1-s,z-y)|\lesssim |t_2-t_1|^{\frac{1}{\alpha}} (t_1-s)^{-\frac{2}{\alpha}} \sum_{i=1,2} q_{\alpha}(t_i-s,z-y),\end{align*}
			which by \eqref{nabla-p} and \eqref{0423:00} implies that
			\begin{align*}
				&\quad| \nabla q_{\alpha} (t_2-s,z-y)-\nabla q_{\alpha}(t_1-s,z-y)|\\
				&\lesssim \left[ [|t_2-t_1|^{\frac{1}{\alpha}} (t_1-s)^{-\frac{2}{\alpha}} ] \wedge (t_1-s)^{-\frac{1}{\alpha}} \right] \sum_{i=1,2} q_{\alpha}(t_i-s,z-y).
			\end{align*}
			Thus, we have
			\begin{align*}
				\mathscr{I}_2 &\le\|b\|_{\infty} \int_{0}^{t_1} \int_{\mathbb{R}^d} | \nabla q_{\alpha} (t_2-s,z-y)-\nabla q_{\alpha}(t_1-s,z-y)| \rho^h_s(z) \dif z \dif s\\
				&\lesssim \int_{0}^{t_1} \left[ [|t_2-t_1|^{\frac1\alpha} (t_1-s)^{-\frac{2}{\alpha}} ] \wedge (t_1-s)^{-\frac{1}{\alpha}} \right] \sum_{i=1,2} \int_{\mathbb{R}^d} q_{\alpha}(t_i-s,z-y) \rho^h_s(z)\dif z\dif s. 
			\end{align*}			
			Noting that for $i=1,2$,
			\begin{align*}
				&\int_{\mathbb{R}^d} q_{\alpha}(t_i-s,z-y) \rho^h_s(z)\dif z\\
				\overset{\eqref{uniform-estimate}}{\lesssim} &\int_{\mathbb{R}^d}\int_{\mathbb{R}^d} q_{\alpha}(t_i-s,z-y)q_{\alpha}(s,x-z)\dif z\mu_{0}(\dif x) \dif s \\
				\overset{\eqref{CK}}{\lesssim} &\int_{\mathbb{R}^d}q_{\alpha}(t_i,x-y) \mu_{0}(\dif x),
			\end{align*}
			and for $\alpha \in (1,2)$,  
			\begin{align*}
				&\quad\int_{0}^{t_1} \left[ [|t_2-t_1|^{\frac1\alpha} (t_1-s)^{-\frac{2}{\alpha}} ] \wedge (t_1-s)^{-\frac{1}{\alpha}} \right]\dif s=\int_{0}^{t_1} \left[ [|t_2-t_1|^{\frac1\alpha} s^{-\frac{2}{\alpha}} ] \wedge s^{-\frac{1}{\alpha}} \right]\dif s\\
				&=\int_{0}^{t_1} s^{-\frac{1}{\alpha}}\left[ [|t_2-t_1|^{\frac1\alpha} s^{-\frac{1}{\alpha}} ] \wedge 1 \right]\dif s\le|t_2-t_1|^{\frac{\alpha-1}{\alpha}}\int_{0}^{\infty} r^{-\frac{1}{\alpha}}[r^{-\frac{1}{\alpha}}\wedge 1]\dif r,
			\end{align*}
			where the last inequality is provided by a change of variable $s=(t_2-t_1)r$. Therefore, we have
			\begin{equation*}
				\mathscr{I}_2 \lesssim |t_1-t_2|^{\frac{\alpha-1}{\alpha}} \sum_{i=1,2} \int_{\mathbb{R}^d} q_{\alpha}(t_i,x-y) \mu_{0}(\dif x).
			\end{equation*}
			For $\mathscr{I}_3,$ by \eqref{uniform-estimate}, \eqref{nabla-p} and \eqref{CK}, we have
			\begin{align*}
				\mathscr{I}_3 &\le\|b\|_{\infty} \int_{t_1}^{t_2} \mE| \nabla q_{\alpha}(t_2-s,X^h_{s}-y)| \dif s\\	
				&=\|b\|_{\infty} \int_{t_1}^{t_2} \int_{\mathbb{R}^d} | \nabla q_{\alpha}(t_2-s,z-y)| \rho^h_s(z) \dif z\dif s\\
				& \lesssim \int_{t_1}^{t_2} (t_2-s)^{-1/\alpha} \left( \int_{\mathbb{R}^d}  q_{\alpha} (t_2-s,z-y) \int_{\mathbb{R}^d} q_{\alpha} (s,x-z) \mu_{0}(\dif x) \dif z \right) \dif s \\
				& =\int_{t_1}^{t_2} (t_2-s)^{-1/\alpha} \dif s \int_{\mathbb{R}^d} q_{\alpha}(t_2,x-y) \mu_{0} (\dif x) \\
				& \lesssim (t_2-t_1)^{-1/\alpha+1} \int_{\mathbb{R}^d} q_{\alpha}(t_2,x-y) \mu_{0}(\dif x).
			\end{align*}
			Combining the above calculations, we get the desired estimate.
		\end{proof}
		
		\section{Proof of the main result}\label{sec3}
		\begin{proof}[Proof of Theorem \ref{11}]
			We recall
			\begin{equation*}
				b^0(s,x):=b(s,x,\rho_s(x)).
			\end{equation*}
			For any $\phi \in C_b^{\infty}$ and $t \in [0,T],$ we let $u^t(s,x)=q_{\alpha}(t-s) \ast \phi(x)$. Then based on \eqref{0423:02}, $u^t$ solves the following backward heat equation
			\begin{equation} \label{4}
				\partial_s u^t+ \Delta^{\frac{\alpha}{2}} u^t=0, \quad u^t(t,x)=\phi(x), \quad s \in (0,t), \ x \in \mathbb{R}^d.
			\end{equation}
			By It\^o's formula to $u^t(s,X_s)$ and $u^t(s,X_s^h),$ we have
			\begin{align*}
				&\quad u^t(t,X_t)-u^t(0,X_0) \\
				&=\int_{0}^{t} (\partial_s u^t+b^0\cdot\nabla u^t+\Delta^{\frac\alpha2}u^t)(s,X_{s}) \dif s \\
				& \qquad +\int_{0}^{t} \int_{\mR^d} \left(u^t(s,X_{s-}+z)-u^t(s,X_{s-}) \right)\widetilde{N}(\dif s, \dif z),
			\end{align*}
			and
			\begin{align*}
				&\quad u^t(t,X_t^h)-u^t(0,X_0) \\
				&=\int_{0}^{t} (\partial_s u^t+\Delta^{\frac\alpha2}u^t)(s,X_{s}^h)+b^h(s,X^h_{\pi_h(s)})\cdot\nabla u^t(s,X_s^h) \dif s \\
				&\qquad +\int_{0}^{t} \int_{\mR^d} \left(u^t(s,X_{s-}^h+z)-u^t(s,X_{s-}^h) \right)\widetilde{N}(\dif s, \dif z),
			\end{align*}
			which implies that 
			\begin{equation*}
				\mathbb{E}u^t(t,X_t)=\mathbb{E}u^t(0,X_0)+\mathbb{E} \int_{0}^{t} \left( b^0 \cdot \nabla u^t \right)(s,X_{s}) \dif s,
			\end{equation*}
			and
			\begin{equation*}
				\mathbb{E}u^t(t,X_t^h)=\mathbb{E}u^t(0,X_0)+\mathbb{E} \int_{0}^{t} b^h(s,X^h_{\pi_h(s)}) \cdot \nabla u^t(s,X_{s}) \dif s.
			\end{equation*}
			Therefore,
			\begin{align*}
				\mathbb{E} \phi(X_t^h) &=\mathbb{E} u^t(0,X_0)+\mathbb{E} \int_{0}^{t} b^h(s,X^h_{\pi_h(s)}) \cdot \nabla u^t(s, X^h_{s}) \dif s \\
				&=\mathbb{E} \phi(X_t)+\mathbb{E} \int_{0}^{t} b^h(s, X_{\pi_h(s)}^h) \cdot \nabla u^t(s, X_{s}^h) \dif s-\mathbb{E} \int_{0}^{t} (b^0\cdot \nabla u^t)(s,X_{s}) \dif s,
			\end{align*}
			which implies that
			\begin{align*}
				|\mathbb{E} \phi(X_t^h)-\mathbb{E} \phi(X_t)| 	\le& \bigg| \mathbb{E} \int_{0}^{t} \left( (b^0 \cdot \nabla u^t)(s,X_{s}^h)-(b^0 \cdot \nabla u^t)(s,X_{s})  \right) \dif s \bigg| \\
				&+ \bigg| \mathbb{E} \int_{0}^{t} \left( b(s,X_{s}^h, \rho_{s}^h(X_{s}^h))-b^0(s,X_{s}^h) \right) \cdot \nabla u^t(s,X_{s}^h) \dif s \bigg| \\
				&+\bigg| \mathbb{E} \int_{0}^{t} \left(b^h(s,X_{s}^h)-b \left(s,X_{s}^h, \rho^h_{s}(X_{s}^h)  \right)  \right) \cdot \nabla u^t(s,X_{s}^h) \dif s \bigg| \\
				&+\bigg| \mathbb{E} \int_{0}^{t} \left(b^h(s,X_{\pi_h(s)}^h) -b^h( s, X_{s}^h) \right) \cdot \nabla u^t(s,X_{s}^h) \dif s  \bigg| \\
				=:&\mathscr{I}_1+\mathscr{I}_2+\mathscr{I}_3+\mathscr{I}_4.
			\end{align*}
			Before calculating these four terms, we note that
			by \eqref{estimate-varro}, for $k\in\mN_0$,
			\begin{align} \label{7}
				\| \nabla^k u^t(s) \|_{\infty} \lesssim (t-s)^{-\frac{k}{\alpha}} \|\phi\|_{\infty},
			\end{align}
			from which we conclude that
			\begin{align}
				\mathscr{I}_1 & =\left| \int_{0}^{t} \left( (b^0 \cdot \nabla u^t)(s,x) \rho_{s}^h(x) -(b^0 \cdot \nabla u^t) (s,x) \cdot \rho_{s}(x) \right) \dif s   \right| \no\\
				& \le \int_{0}^{t} \|\rho_{s}^h-\rho_{s}\|_{1} \|b^0 \cdot \nabla u^t(s)\|_{\infty} \dif s \no\\
				& \lesssim \|\phi\|_{\infty} \|b\|_{\infty} \int_{0}^{t} (t-s)^{-\frac{1}{\alpha}} \cdot \|\rho_{s}^h-\rho_{s}\|_{1} \dif s.\label{I1}
			\end{align}
			For $\mathscr{I}_2,$ taking $r=\infty$ in \eqref{0423:04}, we have
			\begin{align*} 
				\|\rho_t^h\|_\infty \lesssim t^{-\frac{d}{q \alpha}} \| \rho_{0}\|_q,
			\end{align*}
			which by \eqref{6} and \eqref{7} implies that
			\begin{align}
				\mathscr{I}_2 & =\left| \mathbb{E} \int_{0}^{t} \left( b(s, X_{s}^h, \rho_s^h(X_{s}^h)) -b(s,X_{s}^h, \rho_{s}(X_{s}^h)) \right) \cdot \nabla u^t(s,X_{s}^h)  \dif s \right| \no\\
				& \le \kappa\mathbb{E} \int_{0}^{t}|\rho_{s}^h(X_{s}^h)-\rho_{s}(X_{s}^h)| \cdot |\nabla u^t(s,X_{s}^h)| \dif s\no \\
				& =\kappa\int_{0}^{t} \int_{\mathbb{R}^d} | \rho_{s}^h(x)-\rho_{s}(x)| \cdot |\nabla u^t(s,x)| \rho_{s}^h(x)  \dif x \dif s\no \\
				& \lesssim \int_{0}^{t} \int_{\mathbb{R}^d} | \rho_{s}^h(x)-\rho_{s}(x)| (t-s)^{-\frac{1}{\alpha}} \|\phi\|_{\infty} \rho_{s}^h(x) \dif x \dif s \no\\
				& \lesssim \|\phi\|_{\infty}  \int_{0}^{t} (t-s)^{-\frac{1}{\alpha}} s^{-\frac{d}{q\alpha}}\| \rho_{s}^h(x)-\rho_{s}(x) \|_{1} \dif s.\label{I2}
			\end{align}
			For $\mathscr{I}_3,$  we note that
			\begin{align*}
				\mathscr{I}_3 & =\bigg| -\mathbb{E} \int_{0}^{h} b \left( s,X^h_{s} , \rho^h_{s} (X^h_{s})\right)  \cdot \nabla u^t(s,X^h_{s}) \dif s \\
				& \qquad + \mathbb{E} \int_{h}^{t} \left( b (s, X^h_{s} , \rho_{\pi_h(s)}^h(X^h_{s}))-b \left( s,X^h_{s} , \rho^h_{s} (X^h_{s})\right)  \right) \cdot \nabla u^t(s,X^h_{s}) \dif s  \bigg| \\
				& \overset{\eqref{6}}{\le}\|b\|_\infty\int_{0}^{h} \|\nabla u^t(s)\|_\infty \dif s + \kappa\mathbb{E} \int_{h}^{t} |\rho^h_{\pi_h(s)}(X^h_{s})-\rho^h_{s}(X^h_{s})| |\nabla u^t(s,X^h_{s})| \dif s \\
				&=\|b\|_\infty\int_{0}^{h} \|\nabla u^t(s)\|_\infty \dif s + \kappa\int_{h}^{t}\int_{\mR^d} |\rho^h_{\pi_h(s)}(x)-\rho^h_{s}(x)| |\nabla u^t(s,x)|\rho^h_s(x)\dif x \dif s,
			\end{align*}
			where in view of \eqref{10} with $p=2$ and \eqref{0423:04} with $r=q=2$, by H\"older's inequality and \eqref{7}, one sees that
			\begin{align*}
				&\quad\int_{\mR^d} |\rho^h_{\pi_h(s)}(x)-\rho^h_{s}(x)| |\nabla u^t(s,x)|\rho^h_s(x)\dif x\\
				&\le \|\rho^h_{\pi_h(s)}-\rho^h_{s}\|_2\|\nabla u^t(s)\|_\infty\|\rho^h_s\|_2\\
				&\lesssim (s-\pi_h(s))^{\frac{\alpha-1}\alpha}(\pi_h(s))^{-\frac{\alpha-1}{\alpha}}(t-s)^{-\frac1\alpha}\|\phi\|_\infty\|\rho_0\|_2^2.
			\end{align*}
			Since $q\ge2$ and $\rho_0\in L^q\cap L^1\subset L^2$, we have
			\begin{align}
				\mathscr{I}_3&\lesssim \int_{0}^{h} \|\nabla u^t(s)\|_\infty \dif s +\|\phi\|_\infty\int_{h}^{t}(s-\pi_h(s))^{\frac{\alpha-1}\alpha}(\pi_h(s))^{-\frac{\alpha-1}{\alpha}}(t-s)^{-\frac1\alpha} \dif s\no\\
				&\overset{\eqref{7}}{\lesssim}\|\phi\|_\infty\left(\int_{0}^{h}(t-s)^{-\frac{1}{\alpha}}\dif s+ h^{\frac{\alpha-1}\alpha}\int_{h}^{t}(\pi_h(s))^{-\frac{\alpha-1}{\alpha}}(t-s)^{-\frac1\alpha} \dif s\right)\no\\
				&\lesssim \|\phi\|_\infty\left(\int_{0}^{h}(h-s)^{-\frac{1}{\alpha}}\dif s+ h^{\frac{\alpha-1}\alpha}\int_{h}^{t}(s-h)^{-\frac{\alpha-1}{\alpha}}(t-s)^{-\frac1\alpha} \dif s\right)\lesssim \|\phi\|_\infty h^{\frac{\alpha-1}\alpha}.\label{I3}
			\end{align}
			For $\mathscr{I}_4,$ without loss of generality, we assume $t>2h$. Otherwise, it follows from \eqref{7} that
			\begin{align*}
				\mathscr{I}_4\le 2\|b\|_\infty\int_0^{t}\|\nabla u^t(s)\|_\infty\dif s\lesssim\|\phi\|_\infty \int_0^{t}(t-s)^{-\frac{1}{\alpha}}\dif s\lesssim t^{\frac{\alpha-1}{\alpha}}\|\phi\|_\infty\lesssim h^{\frac{\alpha-1}{\alpha}}\|\phi\|_\infty.
			\end{align*}
			 Then we make the following decomposition
			\begin{align*}
				\mathscr{I}_4 
				& \le \bigg| \mathbb{E} \int_{0}^{t} \left( (b^h \cdot \nabla u^t) (s,X^h_{{\pi_h}(s)})- (b^h \cdot \nabla u^t) (s,X_{s}^h)   \right) \dif s \bigg| \\
				& \qquad + \bigg| \mathbb{E} \int_{0}^{t} b^h(s,X^h_{\pi_h(s)}) \cdot \left( \nabla u^t(s, X^h_{s})-\nabla u^t(s,X^h_{\pi_{h(s)}}) \right) \dif s \bigg| \\
				&=: \mathscr{I}_{41}+\mathscr{I}_{42},
			\end{align*}
			where 
			\begin{equation*}
				\mathscr{I}_{41}=\bigg| \int_{h}^{t} \int_{\mathbb{R}^d} (b^h \cdot \nabla u^t)(s,x)(\rho^h_{\pi_{h(s)}}(x)-\rho_{s}^h(x)) \dif x \dif s \bigg|.
			\end{equation*}
			Then by \eqref{7} and \eqref{10}, we have 
			\begin{align}
				\mathscr{I}_{41} &\le \int_{h}^{t} \| b^h \cdot \nabla u^t(s) \|_{\infty} \| \rho^h_{\pi_h(s)}- \rho_{s}^h\|_{1} \dif s\no \\
				& \lesssim \|b\|_{\infty} \| \phi\|_{\infty} \int_{h}^{t} (t-s)^{-\frac{1}{\alpha}} (s-\pi_{h(s)})^{\frac{\alpha-1}{\alpha}} (\pi_h(s))^{-\frac{\alpha-1}{\alpha}}\dif s\no\\
				& \lesssim h^{\frac{\alpha-1}{\alpha}} \| \phi\|_{\infty} \int_{0}^{t} (t-s)^{-\frac{1}{\alpha}} (s-h)^{-\frac{\alpha-1}{\alpha}} \dif s \lesssim h^{\frac{\alpha-1}{\alpha}} \| \phi\|_{\infty}.\label{0423:03}
			\end{align}
			For $\mathscr{I}_{42},$ by \eqref{lem2.1}, 
			we have
			\begin{align*}
				\mathscr{I}_{42} & = \bigg| \mathbb{E} \int_{h}^{t} b^h(s, X^h_{\pi_{h(s)}}) \cdot \left( \nabla u^t(s,X^h_{s}) - \nabla u^t(s,X^h_{\pi_{h(s)}})  \right) \dif s \bigg| \\
				&\le\bigg|\mathbb{E} \int_{h}^{t-h} b^h(s, X^h_{\pi_{h}(s)}) \left(\nabla u^t(s,X_s^h)-\nabla u^t(s,X^h_{\pi_h(s)}) \right) \dif s\bigg|  \\
				& \qquad +\bigg|\mathbb{E} \int_{t-h}^{t} b^h(s, X^h_{\pi_{h}(s)}) \left(\nabla u^t(s,X_s^h)-\nabla u^t(s,X^h_{\pi_h(s)}) \right) \dif s   \bigg| \\
				& \overset{\eqref{lem2.1}}{\lesssim} h \|b\|_{\infty} \int_{h}^{t-h} \left( \| \nabla^2 u^t(s) \|_{\infty} \|b\|_{\infty} + \int_{\mathbb{R}^d} [\| \nabla u^t \|_{\infty} \wedge (|y|^2 \|\nabla^3 u^t \|_{\infty}) ]\frac{1}{|y|^{d+\alpha}} \dif y\right) \dif s \\
				& \qquad +\|b\|_{\infty} \int_{t-h}^{t} \| \nabla u^t(s) \|_{\infty} \dif s, 
			\end{align*}
			where from \eqref{7} and a change of variable, it follows that
			\begin{align*}
				&\quad\| \nabla^2 u^t(s) \|_{\infty} \|b\|_{\infty} + \int_{\mathbb{R}^d} [\| \nabla u^t \|_{\infty} \wedge (|y|^2 \|\nabla^3 u^t \|_{\infty}) ]\frac{1}{|y|^{d+\alpha}} \dif y\\
				&\lesssim\|\phi\|_\infty\left((t-s)^{-\frac{2}{\alpha}}+\int_{\mathbb{R}^d} [(t-s)^{-\frac1\alpha}\wedge (|y|^2(t-s)^{-\frac3\alpha}) ]\frac{1}{|y|^{d+\alpha}} \dif y\right)\\
				&\lesssim\|\phi\|_\infty\left((t-s)^{-\frac{2}{\alpha}}+(t-s)^{-\frac{1+\alpha}{\alpha}}\int_{\mathbb{R}^d} [1\wedge |y|^2]\frac{1}{|y|^{d+\alpha}} \dif y\right)\lesssim \|\phi\|_\infty(t-s)^{-\frac{1+\alpha}{\alpha}}.
			\end{align*}
			Thus, we get
			\begin{align*}
				\mathscr{I}_{42}&\lesssim h\|\phi\|_\infty\int_{h}^{t-h}(t-s)^{-\frac{1+\alpha}{\alpha}}\dif s+\int_{t-h}^{t} \| \nabla u^t(s) \|_{\infty} \dif s\\
				&\overset{\eqref{7}}{\lesssim} h\|\phi\|_\infty\int_{h}^{t-h}(t-s)^{-\frac{1+\alpha}{\alpha}}\dif s+\|\phi\|_\infty\int_{t-h}^{t} (t-s)^{-\frac1\alpha}\dif s\lesssim \|\phi\|_\infty h^{\frac{\alpha-1}\alpha},
			\end{align*}
			with which and \eqref{0423:03} we have
			$
			\mathscr{I}_{4}\lesssim h^{\frac{\alpha-1}\alpha}\|\phi\|_\infty .
			$
			
			In summary, by taking the supermum of $\|\phi\|_\infty=1$ in \eqref{I1}, \eqref{I2}, \eqref{I3} and $\mathscr{I}_{4}$, we have
			\begin{equation*}
				\|\rho_t-\rho_t^h\|_{1} \lesssim \int_{0}^{t} (t-s)^{-\frac{1}{\alpha}} s^{-\frac{d}{q \alpha}} \|\rho_s-\rho_s^h\|_{1} \dif s+h^{\frac{\alpha-1}{\alpha}}.
			\end{equation*}
			By Gronwall inequalities of Volterra's type in \cite[Lemma 2.2]{zhang2010stochastic} (see also \cite[Lemma A.4]{hao2023}), we have
			\begin{align*}
				\| \rho_t-\rho_t^h \|_{1} \lesssim h^{\frac{\alpha-1}{\alpha}}.
			\end{align*}
			From this, we complete the proof.
		\end{proof}
\begin{remark}
\rm 
In the case where the function $b(t,x,u)$ simplifies to $b(t,x)$, the term $\sI_2$ is omitted in the proof, and $\sI_3$ reduces to $|\mE\int_0^h b\cdot\nabla u^t(s,X^h_s)\dif s| \lesssim h^{(\alpha-1)/\alpha}$. Notably, the estimation of $\sI_4$ only requires $\mu_0$ to be a finite measure, thus ensuring the convergence rate akin to \eqref{0425:00} without any assumption on $\mu_0$.
\end{remark}

\subsection*{Acknowledgments}
We are deeply grateful to Prof. Rongchan Zhu for her valuable suggestions and for correcting some errors. 


\bibliographystyle{amsplain}
\bibliographystyle{amsplain}

\end{document}